\newtheorem{theorem}{Theorem}[section]
\newcommand{\MakeTheoremAndCounter}[2]{\newaliascnt{#1}{theorem}
\newtheorem{#1}[#1]{#2}
\aliascntresetthe{#1}
\expandafter\providecommand\csname#1autorefname\endcsname{#2}}
\newtheorem*{restatelargemain}{\autoref{largemain}}
\theoremstyle{definition}
\newtheorem*{remark}{Remark}
\newlist{multistmt}{enumerate}{1}
\setlist[multistmt]{label={\upshape(\roman*)}, nosep}
\DeclareMathOperator*{\diam}{diam}
\DeclareMathOperator*{\supp}{supp}
\DeclareMathOperator*{\gap}{gap}
\DeclareMathOperator*{\tallinf}{inf\vphantom{p}}
\DeclareMathOperator*{\Int}{int}
\newcommand{\ap}[2]{#1[#2]}
\newcommand{\ad}[2]{[#1]^{#2}}
\newcommand{\titleofthepaper}{Substituting the typical compact sets into a power series}
\begin{document}
\title{\titleofthepaper}

\author{Don\'at Nagy}
\address{E\"otv\"os Lor\'and University, Institute of Mathematics, P\'azm\'any P\'eter
s. 1/c, 1117 Budapest, Hungary}
\email{m1nagdon@gmail.com}

\thanks{The author was supported by the National Research, Development and Innovation Office -- NKFIH, grants no.~104178 and 124749.}
\begin{abstract}
The Minkowski sum and Minkowski product can be considered as the addition and multiplication of subsets of $\mathbb{R}$. If we consider a compact subset $K\subseteq[0,1]$ and a power series $f$ which is absolutely convergent on $[0,1]$, then we may use these operations and the natural topology of the space of compact sets to substitute the compact set $K$ into the power series $f$. Changhao Chen studied this kind of substitution in the special case of polynomials and showed that if we substitute the typical compact set $K\subseteq [0,1]$ into a polynomial, we get a set of Hausdorff dimension 0. We generalize this result and show that the situation is the same for power series where the coefficients converge to zero quickly. On the other hand we also show a large class of power series where the result of the substitution has Hausdorff dimension one.
\end{abstract}

\maketitle

\tableofcontents

\section{Introduction}

We may use the Minkowski sum and the Minkowski product to add or multiply two subsets of $\mathbb{R}$:
\[A+B = \{a+b : a\in A, b\in B\},\quad A\cdot B = \{a\cdot b : a\in A, b\in B\}\qquad (A, B \subseteq \mathbb{R}).\]
Iterated application of these operations (and the simple multiplication $c\cdot A = \{c\cdot a : a\in A\}$ for $A\subseteq \mathbb{R}$ and $c\in \mathbb{R}$) allow us to substitute a subset of $\mathbb{R}$ into a polynomial with real coefficients. In \cite[Theorem 1.5]{Chen} Changhao Chen proves the following:

\begin{theorem}[Changhao Chen]\label{thmchen}
Consider a fixed polynomial $p$ (with real coefficients). The typical nonempty compact set $K\subseteq [0,1]$ satisfies that if we substitute $K$ into $p$, then we get a set of Hausdorff dimension 0.
\end{theorem}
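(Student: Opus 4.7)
The plan is a Baire category argument in the complete metric space $\mathcal{K}^{*} = \mathcal{K}([0,1])\setminus\{\emptyset\}$ of nonempty compact subsets of $[0,1]$ equipped with the Hausdorff metric. The strategy rests on two elementary observations. First, the substitution map $K \mapsto p(K)$ is continuous in the Hausdorff metric, because Minkowski addition is $1$-Lipschitz, Minkowski multiplication is Lipschitz on bounded sets, and $p(K)$ is built from $K$ by finitely many such operations together with the scalings $a_{i}\cdot K^{i}$. Second, if $F\subseteq[0,1]$ is finite of size $m$, then $p(F)$ is also finite (of size at most $m^{d(d+1)/2}$ when $\deg p=d$), since every element of $p(F)$ has the form $\sum_{i=0}^{d} a_{i}\, x_{i,1}\cdots x_{i,i}$ with $x_{i,j}\in F$.

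For each $s>0$ and $n\in\mathbb{N}$ I would define
\[
G_{s,n}=\Bigl\{K\in\mathcal{K}^{*}: p(K)\text{ is covered by open intervals }U_{1},\dotsc,U_{N}\text{ with }\sum_{i}\lvert U_{i}\rvert^{s}<1/n\Bigr\}.
\]
Continuity of the substitution map makes $G_{s,n}$ open: the same cover still works for every $K'$ sufficiently close to $K$. To show that $G_{s,n}$ is dense, given $K_{0}\in\mathcal{K}^{*}$ and $\delta>0$, one picks a finite $F\subseteq K_{0}$ with $d_{H}(F,K_{0})<\delta$; since $p(F)$ is finite it trivially admits an open cover of arbitrarily small total $s$-content, placing $F\in G_{s,n}\cap B(K_{0},\delta)$.

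The Baire category theorem then makes $G=\bigcap_{k,n\in\mathbb{N}} G_{1/k,n}$ comeager. For any $K\in G$ and any $s>0$, the Hausdorff $s$-content $\mathcal{H}^{s}_{\infty}(p(K))=0$; but a cover of total $s$-content less than $\varepsilon$ automatically uses pieces of diameter at most $\varepsilon^{1/s}$, so letting $\varepsilon\to 0$ also gives $\mathcal{H}^{s}(p(K))=0$. Thus $\dim_{H} p(K)\le s$ for every $s>0$, i.e.\ $\dim_{H} p(K)=0$, which is exactly what the theorem asserts.

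The only step that requires genuine argument is the continuity of the substitution map, and even that is routine once one uses the boundedness of $[0,1]$. The real interest of this scheme lies in what it suggests for later: the outline transfers unchanged to any map $K\mapsto f(K)$ on $\mathcal{K}^{*}$ which is Hausdorff-continuous and sends finite sets to finite sets. For a generic power series $f$ this second property fails outright, so the main obstacle in the paper's promised extension will be controlling the size or geometry of $f(F)$ for finite $F$ — a task whose difficulty explains why the paper only succeeds for power series with coefficients tending to zero sufficiently fast.
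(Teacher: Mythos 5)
Your proof is correct. Note that the paper does not actually prove this statement itself (it is quoted from Chen's paper), but your Baire-category scheme --- finite sets are dense in $\mathcal{K}([0,1])$, a polynomial maps a finite set of size $m$ to a finite set of size at most $m^{d(d+1)/2}$, and continuity of $K\mapsto p(K)$ makes the small-$s$-content condition open --- is exactly the qualitative core of the argument that the paper elaborates quantitatively in \autoref{zerodimimg}, where the image of a finite set under a power series is no longer finite and must instead be covered by a controlled number of short intervals.
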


Recall that we say that a property $\mathcal{P}$ is true for the \emph{typical nonempty compact subset} of a space $X$ when
\[\{K\in \mathcal{K}(X) : K \text{ does not satisfy }\mathcal{P}\}\quad\text{is meager in $\mathcal{K}(X)$,}\]
where $\mathcal{K}(X)$ is the space of nonempty compact subsets of $X$. We include the definitions of the classical notions of \emph{meager sets} and \emph{Hausdorff dimension} in \autoref{prelim}, where we also state some well-known facts about the topology of $\mathcal{K}(X)$. 

Chen also noticed that it is possible to generalize this and substitute a compact set $A\subseteq \mathbb{R}$ into a power series $f$, assuming that $f$ satisfies some simple convergence conditions. (The result of the substitution is simply the limit of the partial sums w.r.t.\ the topology of $\mathcal{K}(X)$.)

We generalize \autoref{thmchen} in this direction and prove \autoref{zerodimimg}, which states that if the coefficients of the power series are small enough (i.e.\ they converge to 0 quickly), then the image of the typical compact set is zero-dimensional.

On the other hand, we also prove \autoref{largemain} which states that if the coefficients are large enough (i.e.\ their convergence to 0 is slow enough), then the image of the typical compact set will contain an interval (and this means that its Hausdorff dimension is one).

This problem is also similar to the classical and well-studied area of Bernoulli convolutions. The Bernoulli convolution $\nu_\lambda$ corresponding to the parameter $0<\lambda<1$ is defined as the distribution of the random variable
\[\sum_{i=0}^\infty X_i \cdot \lambda^i,\]
where $X_0$, $X_1$, \ldots\ are independent random variables taking the values $\pm 1$ with equal probability. For the definitions, questions and results in this area, see e.g. \cite{PSS}.

Notice that the infinite sum of random variables (which corresponds to an infinite convolution of measures) is a close analogue to infinite Minkowski sums of (compact) sets. The only difference is that while our results simply consider the Minkowski sums of unstructured sets, the definition of Bernoulli convolutions considers sets equipped with probability measures and maintains this additional structure through the Minkowski summation.

With this additional structure, results about Bernoulli convolutions may speak of the Hausdorff dimension of the measure $\nu_\lambda$ (see \cite[section 4]{PSS} for the definition and some results), which may be interesting even in cases when it is trivial that $\dim_H \supp \nu_\lambda = 1$. On the other hand, our results take infinite sums of sets that are small and simple in certain senses, but usually not as simple as the set $\{-\lambda^i, \lambda^i\}$ that appears in the construction of $\nu_\lambda$.

\section{Preliminaries}\label{prelim}

Recall that a set is \emph{nowhere dense} if its closure has empty interior and a set is \emph{meager} if it is a countable union of nowhere dense sets. This notion is customarily used to formally state that some set is small and negligible. For example we say that a property $\mathcal{P}$ is satisfied by the \emph{the typical element} of a topological space $X$ when
\[\{x\in X : x \text{ does not satisfy }\mathcal{P}\}\quad\text{is meager in $X$.}\]
For more information about these see e.g.\ \cite[section 8]{Ke}.

Also recall that if $(X, d)$ is a metric space (e.g. $X=[0,1]$), then its \emph{nonempty} compact subsets also form a metric space $(\mathcal{K}(X), d_\mathrm{H})$, where $d_\mathrm{H}$ is the Hausdorff metric defined by
\[d_\mathrm{H}(K,L)=\max\left\{\sup_{k\in K}\tallinf_{\ell\in L}d(k,\ell),\,\sup_{\ell\in L}\tallinf_{k\in K}d(k,\ell)\right\}.\]
It is well known that if $X$ is separable, complete or compact, then $\mathcal{K}(X)$ is respectively separable, complete or compact. For the proofs of these results and more facts about this space see e.g.\ \cite[subsection 4.F]{Ke}.

\begin{remark}
In \cite{Ke}, $\mathcal{K}(X)$ is defined as the space of all compact subsets of $X$, but we defined $\mathcal{K}(X)$ as the space of all \emph{nonempty} compact subsets of $X$. This modified definition is frequently used in the literature, as otherwise there are many situations where the trivial case of $\emptyset$ is exceptional. (For example the definition of $d_\mathrm{H}$ must be extended by the cases $d_\mathrm{H}(\emptyset, \emptyset) = 0$ and $d_\mathrm{H}(\emptyset, K) = 1$ for $K \neq \emptyset$.)
\end{remark}

We also use the classical notion of the Hausdorff dimension. The Hausdorff dimension of a set $F\subseteq \mathbb{R}$ is defined as
\[\dim_\mathrm{H} F = \inf\{s \ge 0 : \mathcal{H}^s(F) = 0\} = \sup\{s : \mathcal{H}^s(F) = \infty\}\]
where we use the convention that $\sup\emptyset = 0$ and $\mathcal{H}^s(F)$ denotes the $s$-dimensional Hausdorff measure
\[\mathcal{H}^s(F) = \lim_{\delta\to 0} \inf\left\{\sum_{i=1}^\infty \diam( U_i)^s : F\subseteq \bigcup_{i=1}^\infty U_i \text{ and }\diam(U_i) <\delta\right\}.\]

To calculate the Hausdorff dimension, we introduce the simpler quantity
\[\mathcal{H}^s_\infty(F) = \inf\left\{\sum_{i=1}^\infty \diam(U_i)^s : F\subseteq \bigcup_{i=1}^\infty U_i\right\},\]
which is usually called the $s$-dimensional Hausdorff capacity of $F$. The elementary observation \cite[Proposition 5.1.5]{AH} states that $\mathcal{H}^s_\infty(F) =0$ if and only if $\mathcal{H}^s(F)=0$ and this immiediately implies that
\[\dim_\mathrm{H} F = \inf\{s \ge 0 : \mathcal{H}^s_\infty(F) = 0\}.\]
(The usual definition of $\dim_{\mathrm{H}}$ is stated with $\mathcal{H}^s$ because $\mathcal{H}^s_\infty$ lacks some nice properties of $\mathcal{H}^s$, e.g.\ the intervals are not measurable with respect to $\mathcal{H}^s_\infty$.)

For more information about the Hausdorff dimension see e.g. \cite[section 2]{Fa}.

\section{Notations}

As the notation $A^n$ usually denotes the set $\{a^n : a\in A\}$, we introduce a new piece of notation to denote the iterated application of the Minkowski product:

\begin{definition}
For a set $A\subseteq \mathbb{R}$ and $n\in \mathbb{N}$, let $\ad{A}{n}$ denote the Minkowski product of $n$ copies of $A$:
\[\ad{A}{n} = \left\{\prod_{i=0}^{n-1} a_i : a_i \in A \text{ for }0\le i < n\right\}\]
In particular, we define $\ad{A}{0}=\{1\}$ for any set $A\subseteq \mathbb{R}$.
\end{definition}

We also use a similar notation for substituting a compact set into a power series:

\begin{definition}
If $K\subseteq [0,1]$ is compact and $f(x)=\sum_{i=0}^\infty a_i x^i$ is a power series which converges absolutely on $[0,1]$, then let
\[\ap{f}{K}=\sum_{i=0}^\infty a_i \ad{K}{i}\]
where the summation is performed using the Minkowski sum and convergence w.r.t.\ the Hausdorff distance.
\end{definition}

\section{Lengths of intervals}

For an interval $I\subseteq \mathbb{R}$, we use $\diam(I)$ to denote the length (i.e. diameter) of the interval $I$.

\begin{fact}\label{diamtriv}
Assume that $I, J \subseteq \mathbb{R}$ are nonempty intervals and $t\in \mathbb{R}$. Then it is easy to verify the following:
\begin{multistmt}
\item $\diam(I+t) = \diam(I)$
\item $\diam(tI) = \lvert t\rvert \cdot \diam(I)$
\item $\diam(I+J) = \diam(I) + \diam(J)$
\end{multistmt}
\end{fact}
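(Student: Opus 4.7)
The plan is to reduce all three identities to the formula $\diam(I) = \sup I - \inf I$, which holds for every nonempty interval $I$ (with the convention $\infty - c = \infty$ for unbounded cases). Once this is in hand, the three statements follow from the behavior of $\sup$ and $\inf$ under the operations in question.

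First I would note that $I + t$ is again an interval, with $\sup(I+t) = \sup(I) + t$ and $\inf(I+t) = \inf(I) + t$, so the two translations cancel in the difference, giving (i). For (ii) I would split into the cases $t \ge 0$ and $t < 0$: in the first case $\sup(tI) = t\sup(I)$ and $\inf(tI) = t\inf(I)$, while in the second case the operations swap, giving $\sup(tI) = t\inf(I)$ and $\inf(tI) = t\sup(I)$; in both cases the diameter picks up a factor of $\lvert t\rvert$. The case $t=0$ is trivial since $tI = \{0\}$.

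For (iii), the key observation is that $I+J$ is again an interval (the Minkowski sum of two intervals is an interval, since for $a_1,a_2\in I$ and $b_1,b_2\in J$ the segment between $a_1+b_1$ and $a_2+b_2$ lies in $I+J$ by a componentwise convex combination). Then $\sup(I+J) = \sup(I) + \sup(J)$ and $\inf(I+J) = \inf(I) + \inf(J)$, and subtracting yields (iii).

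Since everything comes down to the monotonicity and additivity of $\sup$ and $\inf$ on intervals, there is no real obstacle; the only care needed is to handle the sign in (ii) and to check that $I+J$ is actually an interval in (iii), which is why the statement is phrased as a \emph{Fact} rather than a full lemma.
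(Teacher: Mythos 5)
Your proof is correct and is exactly the elementary verification the paper intends: the paper states this as a Fact with no written proof ("it is easy to verify"), and your reduction to $\diam(I)=\sup I-\inf I$ together with the behavior of $\sup$ and $\inf$ under translation, scaling, and Minkowski sum fills in those details properly, including the sign split in (ii) and the check that $I+J$ is an interval in (iii).
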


For the length of the Minkowski product of intervals, we will only need the following simple inequality:

\begin{lemma}\label{diamprod}
If $I, J \subseteq [0,1]$ are intervals, then
\[\diam(I\cdot J)\le \diam(I) + \diam(J).\]
\end{lemma}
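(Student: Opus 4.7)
My plan is to reduce the claim to a direct computation on endpoints, exploiting that $I, J \subseteq [0,1]$ forces both intervals to consist of non-negative numbers.

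First I would write $I = [a, b]$ and $J = [c, d]$ with $0 \le a \le b \le 1$ and $0 \le c \le d \le 1$. Since the function $(x, y) \mapsto xy$ is continuous on the compact connected set $I \times J$, the Minkowski product $I \cdot J$ is a compact interval; because both factors are non-negative, this interval has minimum $ac$ (attained at $(a,c)$) and maximum $bd$ (attained at $(b,d)$). Hence $\diam(I \cdot J) = bd - ac$.

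The main (and only) algebraic step is then the identity
\[
bd - ac \;=\; b(d-c) + c(b-a),
\]
combined with the bounds $b \le 1$ and $c \le 1$ coming from $I, J \subseteq [0,1]$, which give
\[
\diam(I \cdot J) \;=\; b(d-c) + c(b-a) \;\le\; (d-c) + (b-a) \;=\; \diam(J) + \diam(I).
\]
There is no real obstacle here; the inequality is tight (e.g.\ when $b = c = 1$), which confirms that one cannot do better using only this hypothesis and also explains why the $[0,1]$ assumption is essential: without it, the factors $b$ and $c$ could be arbitrarily large and the inequality would fail.
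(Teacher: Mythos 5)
Your proof is correct and follows essentially the same route as the paper: identify $\diam(I\cdot J)=bd-ac$ from the endpoints and finish with elementary algebra using $b,c\le 1$. The only cosmetic difference is the algebraic identity used (you write $bd-ac=b(d-c)+c(b-a)$, while the paper factors the difference of the two sides as $(1-a)(1-c)-(1-b)(1-d)$); both are equally valid, and your remark that tightness at $b=c=1$ shows the $[0,1]$ hypothesis is essential is a nice touch.
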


\begin{proof}
Let $a = \inf I$, $b = \sup I$, $c = \inf J$ and $d=\sup J$ denote the endpoints of these intervals. Then $\diam(I) + \diam(J) = (b-a) + (d-c)$. On the other hand, $\inf (I\cdot J) = ac$ and $\sup(I\cdot J) = bd$ and therefore $\diam(I\cdot J) = bd-ac$. Therefore 
$\diam(I) + \diam(J) -\diam(I\cdot J) = (b-a) + (d-c) - (bd-ac)= (1-a)(1-c) - (1-b)(1-d)$
and this is clearly nonnegative, as $0\le 1-b < 1-a$ and $0\le 1-d < 1-c$.
\end{proof}

\section{The case of small images}

\begin{theorem}\label{zerodimimg}
Consider a power series $f(x) = \sum_{i=0}^\infty a_i x^i$ which converges absolutely on $[0,1]$ and satisfies that $\frac{\log \lvert a_i\rvert}{i\log i}$ converges to $-\infty$. Then the typical compact set $K\subseteq [0,1]$ satisfies that $\dim_\mathrm{H} \ap{f}{K} = 0$.
\end{theorem}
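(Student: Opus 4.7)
My strategy is a Baire category argument: for every rational $s > 0$ and $\eta > 0$ the set
\[U_{s,\eta} := \{K \in \mathcal{K}([0,1]) : \mathcal{H}^s_\infty(\ap{f}{K}) < \eta\}\]
will be shown to be open and dense in $\mathcal{K}([0,1])$. Then $\bigcap_{s,\eta \in \mathbb{Q}_{>0}} U_{s,\eta}$ is comeager by Baire's theorem and coincides with $\{K : \dim_\mathrm{H}\ap{f}{K} = 0\}$ by the characterization of the Hausdorff dimension via $\mathcal{H}^s_\infty$ recorded in \autoref{prelim}.

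Openness of $U_{s,\eta}$ will follow from two standard facts. First, $K \mapsto \ap{f}{K}$ is Hausdorff-continuous: each truncation $K \mapsto \sum_{i<M} a_i \ad{K}{i}$ is continuous because Minkowski multiplication and addition are continuous operations on $\mathcal{K}(\mathbb{R})$, and the partial sums converge uniformly in $K$ because the tail has diameter at most $\sum_{i \ge M}|a_i|$, which tends to $0$ independently of $K$. Second, $\mathcal{H}^s_\infty$ is upper semicontinuous on $\mathcal{K}(\mathbb{R})$: a finite cover by open intervals witnessing $\mathcal{H}^s_\infty(L) < \eta$ remains, after an arbitrarily small enlargement of its members, a valid witness for every set sufficiently close to $L$ in Hausdorff distance. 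Composing gives openness.

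For density I will show that every finite $K' = \{x_1, \dots, x_n\} \subseteq [0,1]$ satisfies $\mathcal{H}^s_\infty(\ap{f}{K'}) = 0$; since finite sets are dense in $\mathcal{K}([0,1])$, this suffices. By commutativity of Minkowski multiplication, $\ad{K'}{i}$ contains at most $\binom{n+i-1}{i}$ values (one per multiset of $i$ factors drawn from $\{x_1,\dots,x_n\}$), so the partial sum $\sum_{i<M} a_i \ad{K'}{i}$ is a finite set with at most $B_M := \prod_{i=0}^{M-1}\binom{n+i-1}{i}$ points, while the tail $\sum_{i \ge M} a_i \ad{K'}{i}$ lies in an interval of length at most $r_M := \sum_{i \ge M}|a_i|$. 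Hence $\ap{f}{K'}$ is covered by $B_M$ intervals of length at most $r_M$, yielding $\mathcal{H}^s_\infty(\ap{f}{K'}) \le B_M r_M^s$. A Stirling estimate gives $\log B_M = (n-1) M \log M + O(M)$ for fixed $n$, and the hypothesis $\log|a_i|/(i\log i) \to -\infty$ supplies, for every $C > 0$, the eventual bound $|a_i| \le i^{-Ci}$, which forces $r_M \le 2 M^{-CM}$ for large $M$. Choosing any $C > (n-1)/s$ makes $\log(B_M r_M^s) = ((n-1)-Cs)M\log M + O(M) \to -\infty$, so $\mathcal{H}^s_\infty(\ap{f}{K'}) = 0$.

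The main obstacle is precisely the count $B_M$. The naive cover of $\ad{K'}{i}$ by ordered $i$-tuples of factors gives $n^i$ products, hence $B_M \sim n^{M(M-1)/2}$ with an $M^2$ in the exponent, and no fixed $C$ makes $r_M^s \le M^{-CsM}$ dominate such a count. Exploiting commutativity of Minkowski multiplication to pass to multisets cuts $|\ad{K'}{i}|$ down to the polynomial-in-$i$ value $\binom{n+i-1}{i}$, so $\log B_M$ grows only like $M\log M$; this is exactly the growth rate that the hypothesis $\log|a_i|/(i\log i) \to -\infty$ is tailored to dominate.
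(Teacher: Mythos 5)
Your proof is correct, and while its combinatorial heart is the same as the paper's, the Baire-category skeleton around it is genuinely different. Both arguments hinge on the same two ingredients: exploiting commutativity of the Minkowski product to count $\ad{K'}{i}$ by multisets rather than tuples (so that the number of covering pieces up to level $M$ grows like $e^{O(M\log M)}$ rather than $e^{O(M^2)}$), and the observation that the hypothesis $\log\lvert a_i\rvert/(i\log i)\to-\infty$ makes the tail $r_M$ decay like $M^{-CM}$ for every $C$, which beats that count. The difference is in how the residual set is produced. The paper builds an explicit dense $G_\delta$ set of compact sets that are, for infinitely many $n$, within $\varepsilon_n$ of an $M_n$-point grid $D_n$, where the grid size $M_n$ is tied to the truncation level $n$ and must grow slowly enough (condition \eqref{mnchoice}) for a single uniform estimate to work for all $K$ in the residual set; this forces the bookkeeping with $\varepsilon_n$, \autoref{diamprod}, and the interval systems $\mathcal{J}_i$. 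You instead decouple the two roles: you prove the covering estimate only for genuinely finite sets $K'$ (where the cardinality $n$ of $K'$ is fixed independently of the truncation level $M$, so the constant $C$ may depend on $n$ and $s$), and you transfer the conclusion to a comeager set via two soft facts --- continuity of $K\mapsto\ap{f}{K}$ in the Hausdorff metric and upper semicontinuity of $\mathcal{H}^s_\infty$ on $\mathcal{K}(\mathbb{R})$ --- which make each $U_{s,\eta}$ open and dense. Your route buys modularity and less explicit bookkeeping (no $\varepsilon_n$, no $M_n$, no \autoref{diamprod}), at the price of having to establish the two semicontinuity facts, which the paper's direct covering argument never needs; both of those are standard, and your sketches of them (uniform convergence of the truncations, finite open enlargement of a witnessing cover of a compact set) are the right proofs. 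One cosmetic remark: your count $B_M$ and tail bound give a cover by $B_M$ intervals of length at most $r_M$, exactly parallel to the paper's \autoref{coverseries}, and the final comparison $((n-1)-Cs)M\log M\to-\infty$ plays the role of \eqref{mnchoice}.
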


Note that in this paper $\log$ denotes the natural (base $e$) logarithm and we use the convention that $\log 0 = -\infty$. Also note that the absolute convergence of $f$ implies that for all but finitely many $n\in\mathbb{N}$, $\lvert a_i\rvert < 1$ and $\log \lvert a_i\rvert < 0$.

\begin{proof}
For $n\in\mathbb{N}$, let $R_n=\sum_{i=n}^\infty \lvert a_i\rvert$.

To simplify some technical details, let us assume that $R_n >0$ for all $n\in\mathbb{N}$. (We may make this assumption, because otherwise $f$ is a polynomial and \cite[Theorem 1.5]{Chen} already states that $\dim_\mathrm{H} \ap{f}{K}= 0$.)

For $K\in \mathcal{K}([0,1])$ and $n\in \mathbb{N}$, it is clear from the definition that
\begin{equation}\label{finiteexp}
\ap{f}{K} \subseteq \sum_{i=0}^{n-1} a_i \ad{K}{i} + [-R_n, R_n].
\end{equation}
We will use this to show that $\mathcal{H}^s_\infty(\ap{f}{K})=0$ for any $s>0$.

Notice that $R_n=\sum_{i=n}^\infty \lvert a_i\rvert$ is not much larger than its first term, $\lvert a_n\rvert$ (as $a_i$ converges to zero quickly). This observation can be formalized to prove the following claim:
\begin{claim}
$\displaystyle \frac{\log R_n}{n\log n}\longrightarrow -\infty$.
\end{claim}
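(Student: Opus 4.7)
The goal is to transfer the fast-decay hypothesis on the individual coefficients $|a_i|$ to the tail sums $R_n$. Intuitively, when $|a_i|$ decays super-exponentially in $i\log i$, the tail $R_n$ is dominated by its leading term $|a_n|$ up to a harmless constant, so essentially the same asymptotic rate should carry over to $\log R_n/(n\log n)$.

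Concretely, my plan is to fix an arbitrary $M>0$ and show that $\log R_n < -M\,n\log n$ for all sufficiently large $n$. To obtain this, I would apply the hypothesis with slack in the constant: since $\log|a_i|/(i\log i)\to-\infty$, there exists $N$ such that $|a_i|\le i^{-(M+1)i}$ for every $i\ge N$. The key observation is then that for $i\ge n\ge 2$ one has $1/i\le 1/n$, so $i^{-(M+1)i}\le n^{-(M+1)i}$. This pointwise comparison lets me dominate the tail by a geometric series: for $n\ge\max(N,2)$,
\[
R_n = \sum_{i=n}^\infty |a_i| \;\le\; \sum_{i=n}^\infty n^{-(M+1)i} \;=\; \frac{n^{-(M+1)n}}{1-n^{-(M+1)}}.
\]
For $n$ large enough the denominator exceeds $1/2$, so $R_n\le 2\,n^{-(M+1)n}$. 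Taking logarithms and dividing by $n\log n$ yields $\log R_n/(n\log n)\le -(M+1)+o(1)$, which is below $-M$ eventually; since $M$ was arbitrary, the claim follows.

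I do not anticipate a real obstacle: the argument is a routine manipulation once the correct amount of slack (replacing $M$ by $M+1$) is built in so that a geometric-series comparison becomes available. The only mild point to handle is that the first few terms $|a_i|$ with $i<N$ might not satisfy the pointwise bound, but this is immaterial because the tail sum starts at $i=n$ and we only need the estimate for $n$ large, at which stage all summands lie in the range where the hypothesis applies.
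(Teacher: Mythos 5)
Your argument is correct and is essentially the paper's own proof: both fix an arbitrary rate constant, use the hypothesis to bound $\lvert a_i\rvert$ by $e^{-Ci\log i}$ for large $i$, compare $i^{-Ci}\le n^{-Ci}$ for $i\ge n$ to dominate the tail by a geometric series with ratio $n^{-C}$, and conclude after taking logarithms. The only cosmetic difference is your choice of slack ($M+1$ versus the paper's $K\ge 1$ with the bound $1-n^{-K}\ge 1/2$).
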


\begin{proof}
If $K\ge 1$, then there exists an index $n_0\in\mathbb{N}$ such that if $i\ge n_0$, then $\frac{\log\lvert a_i\rvert}{i\log i}<-K$. This means that for all $n\ge \max\{n_0,2\}$,
\begin{align*}
R_n & = \sum_{i=n}^\infty \lvert a_i\rvert <\sum_{i=n}^\infty e^{-Ki\log i}\le e^{-Kn\log n} \cdot \sum_{i=0}^\infty e^{-K i \log n}\\
&=\frac{e^{-Kn\log n}}{1-n^{-K}}<2e^{-K n\log n},
\end{align*}
using $K\ge 1$ and $n\ge 2$ in the last inequality. This shows that $\frac{\log R_n}{n\log n} \longrightarrow -\infty$.
\end{proof}

It is clearly possible to select a sequence $(M_n)_{n\in\mathbb{N}}$ of positive integers which (slowly) converges to infinity but satisfies that
\begin{equation}\label{mnbound}
M_n\le n+1\quad\text{for each $n\in\mathbb{N}$}
\end{equation}
and
\begin{equation}\label{mnchoice}
\text{for any $s>0$,}\quad\lim_{n\to\infty} \left(M_n + s\cdot \frac{\log (4 R_n)}{n\log (2n)}\right) = -\infty.
\end{equation}

Fix a sequence $(\varepsilon_n)_{n\in\mathbb{N}}$ such that for all $n\in\mathbb{N}$,
\begin{equation}\label{epsnchoice}
0<\varepsilon_n < \min\left\{\frac{R_n}{nR_0},  \frac{1}{2M_n}\right\}.
\end{equation}
(Here $\varepsilon_n <\frac{R_n}{n R_0}$ is the important upper bound, while the assumption that $\varepsilon_n<\frac{1}{2M_n}$ is just a technical detail.)

For $n\in\mathbb{N}$ consider the sets
\[D_n = \left\{ \frac{2j+1}{2 M_n} : 0\le j < M_n\right\}\text{,}\quad \mathcal{D}_n = \{K\subseteq D_n : K \neq \emptyset\},\]
\[\mathcal{U}_n = B_{\varepsilon_n}(\mathcal{D}_n) = \left\{ K \in \mathcal{K}([0,1]) : \exists\, D\in \mathcal{D}_n \big(d_\mathrm{H}(K,D)<\varepsilon_n\big)\right\}\]
\[\text{and}\quad \mathcal{G} = \bigcap_{n_0\in\mathbb{N}}\bigcup_{n\ge n_0} \mathcal{U}_n.\]

\begin{claim}
$\mathcal{G}$ is a dense $G_\delta$ set in $\mathcal{K}([0,1])$.
\end{claim}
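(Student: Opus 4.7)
The plan is to show separately that $\mathcal{G}$ is a $G_\delta$ set and that it is dense, then the statement follows. (Since $\mathcal{K}([0,1])$ is a Polish space, the Baire category theorem would also give density of $\mathcal{G}$ from density of each $\bigcup_{n\ge n_0}\mathcal{U}_n$, but it is simplest to prove density directly.)

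For the $G_\delta$ part I first observe that $\mathcal{D}_n$ is finite (since $D_n$ is finite), so
\[\mathcal{U}_n = \bigcup_{D\in\mathcal{D}_n} B_{\varepsilon_n}(D)\]
is a finite union of open balls in $(\mathcal{K}([0,1]), d_\mathrm{H})$ and hence open. Therefore each $\bigcup_{n\ge n_0}\mathcal{U}_n$ is open and $\mathcal{G}$ is the intersection of countably many open sets.

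For density, let $K\in \mathcal{K}([0,1])$ and $\delta>0$ be arbitrary; I will find some $L\in \mathcal{G}$ with $d_\mathrm{H}(K,L)<\delta$. Since $M_n \to \infty$, I can pick an arbitrarily large $n$ with $\frac{1}{2M_n}<\delta$. For each $k\in K$ pick $\varphi(k)\in D_n$ with $|k-\varphi(k)|\le \frac{1}{2M_n}$ (possible because $D_n$ is an $\frac{1}{2M_n}$-net of $[0,1]$) and set $L = \varphi(K)$. Then $L$ is a nonempty finite subset of $D_n$, so $L\in \mathcal{D}_n\subseteq \mathcal{U}_n$, and a direct check using the definition of $\varphi$ gives $d_\mathrm{H}(K,L)\le \frac{1}{2M_n}<\delta$.

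To finish, I show $L\in \mathcal{G}$. Given any $n_0$, pick $n\ge n_0$ large enough that $\frac{1}{2M_n}<\delta$, and run the construction above to obtain some $L_n\in \mathcal{U}_n$ with $d_\mathrm{H}(K,L_n)<\delta$; however, what we actually need is that the \emph{single} $L$ I produced already lies in infinitely many $\mathcal{U}_n$. The cleanest way is therefore to argue density of each open set $\bigcup_{n\ge n_0}\mathcal{U}_n$ (which is exactly what the construction above gives) and then conclude by Baire category in the Polish space $\mathcal{K}([0,1])$ that $\mathcal{G}$ is dense $G_\delta$. The only step with any content is choosing $n$ large enough that the mesh $\frac{1}{2M_n}$ of $D_n$ falls below the prescribed $\delta$; there is no real obstacle here because (\ref{mnbound}) was only an upper bound on $M_n$ and the assumption that $M_n\to\infty$ (stated right before (\ref{mnbound})) is exactly what makes the approximation possible.
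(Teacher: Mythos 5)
Your proof is correct and follows essentially the same route as the paper: show each $\bigcup_{n\ge n_0}\mathcal{U}_n$ is open (via openness of each $\mathcal{U}_n$) and dense (by approximating $K$ with a nonempty subset of $D_n$ for $n$ large enough that $\tfrac{1}{2M_n}$ is below the prescribed tolerance), then conclude by Baire category in the complete space $\mathcal{K}([0,1])$. The mid-proof detour attempting to place a single $L$ in $\mathcal{G}$ directly is unnecessary, but you correctly abandon it and the final argument stands.
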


\begin{proof}
It is enough to prove that $\bigcup_{n\ge n_0} \mathcal{U}_n$ is dense and open for each $n_0\in\mathbb{N}$. The openness of this set immediately follows from the fact that $\mathcal{U}_n$ is open for each $n\in \mathbb{N}$. To show that this set is dense, it is enough to show that even $\bigcup_{n\ge n_0} \mathcal{D}_n$ is dense for each $n_0\in\mathbb{N}$.

Fix arbitrary $K\in \mathcal{K}([0,1])$ and $\varepsilon>0$. Choose an index $n\ge n_0$ which satisfies that $\frac{1}{2M_n} <\varepsilon$ (such $n$ exists because $M_n$ is unbounded). Then the set
\[D= \left\{ \frac{2j+1}{2 M_n} : 0\le j < M_n, \left[\frac{j}{M_n}, \frac{j+1}{M_n}\right]\cap K\neq\emptyset\right\}\]
is a nonempty subset of $D_n$ and it is easy to verify that 
\[d_\mathrm{H}(K, D) \le \frac{1}{2M_n}<\varepsilon.\qedhere\]
\end{proof}

To prove \autoref{zerodimimg}, we will fix an arbitrary $s>0$ and a compact set $K\in\mathcal{G}$ and show that $\mathcal{H}^s_\infty(\ap{f}{K})=0$.

Assume that $n\in\mathbb{N}$ is an index such that $K\in\mathcal{U}_n$. (There are infinitely many indices satisfying this, because $K\in\mathcal{G}$.) This assumption means that
\[d_\mathrm{H}(K, D) <\varepsilon_n\quad\text{for some nonempty }D\subseteq D_n.\]
Consider the intervals 
\[I_{\ell}=\left]\frac{2\ell+1}{2M_n}-\varepsilon_n, \frac{2\ell+1}{2M_n}+\varepsilon_n\right[\text{ for }0\le \ell <M_n.\]
\begin{remark} As we have assumed that $\varepsilon_n <\frac{1}{2M_n}$, here $I_\ell\subseteq [0,1]$ for $0\le \ell <M_n$. 
\end{remark}
As $D\subseteq D_n$ and $K$ is near $D$ (in the Hausdorff distance), it is easy to see that
\[K\subseteq \bigcup_{\ell=0}^{M_n-1} I_\ell = B_{\varepsilon_n}(D_n).\]
We will use this cover of $K$ to select a system of intervals which covers $\ap{f}{K}$. To do this, we will first prove the following claim:

\begin{claim}\label{coverprod}
If $i$ is a positive integer, then there exists a system $\mathcal{J}_i$ of intervals such that
\begin{multistmt}
\item $\ad{K}{i}\subseteq \bigcup_{J\in \mathcal{J}_i} J$,
\item for each $J\in\mathcal{J}_i$, $\diam J \le 2 i\varepsilon_n$, and
\item $\lvert \mathcal{J}_i\rvert \le (i+n)^{M_n}$.
\end{multistmt}
\end{claim}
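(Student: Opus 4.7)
The plan is to cover $K$ by the intervals $I_0, \ldots, I_{M_n-1}$ and let $\mathcal{J}_i$ consist of one iterated Minkowski product $I_{\ell_1} \cdot I_{\ell_2} \cdots I_{\ell_i}$ for each \emph{multiset} (as opposed to tuple) of indices $\ell_1, \ldots, \ell_i \in \{0, \ldots, M_n - 1\}$. The crucial observation is that multiplication of real numbers is commutative, so the Minkowski product depends only on how many times each index appears; passing from tuples to multisets is what keeps the count polynomial in $i$ rather than exponential.

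Property (i) is then immediate: for any $k_1, \ldots, k_i \in K \subseteq \bigcup_\ell I_\ell$, each factor lies in some $I_{\ell_j}$, so the product lies in the corresponding element of $\mathcal{J}_i$. For property (ii), I would first note (as in the remark above) that each $I_\ell$ is contained in $[0,1]$, so that \autoref{diamprod} applies to each successive Minkowski product; iterating that lemma gives
$$\diam(I_{\ell_1} \cdots I_{\ell_i}) \le \sum_{j=1}^i \diam(I_{\ell_j}) = 2 i \varepsilon_n.$$
One also needs each such product to be an interval, which holds because it is the continuous image of a connected product space under multiplication.

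The main step---and the only place where the bound $M_n \le n+1$ from \eqref{mnbound} plays a role---is counting the multisets in (iii). The number of size-$i$ multisets from an $M_n$-element alphabet is $\binom{i + M_n - 1}{M_n - 1}$, which I would bound crudely by $(i + M_n - 1)^{M_n - 1} \le (i + n)^{M_n}$, using $M_n - 1 \le n$ and $i + n \ge 1$. The combinatorial count itself is elementary; the real point is that the whole construction has been arranged precisely so that $|\mathcal{J}_i|$ grows only subexponentially in $i$, which is what will eventually make the sum $\sum_{J \in \mathcal{J}_i} (\diam J)^s$ tractable when the claim is used to bound $\mathcal{H}^s_\infty(\ap{f}{K})$.
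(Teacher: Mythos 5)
Your proposal is correct and follows essentially the same route as the paper: index the products by multisets (equivalently, non-decreasing index sequences) to get the count $\binom{i+M_n-1}{M_n-1}\le (i+M_n-1)^{M_n-1}\le (i+n)^{M_n}$ via \eqref{mnbound}, and iterate \autoref{diamprod} for the diameter bound. Your added remarks---that the intermediate products stay in $[0,1]$ so the lemma applies at each step, and that the products are genuinely intervals as continuous images of connected sets---are details the paper leaves implicit.
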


\begin{proof}
The intervals $I_0$, $I_1$, \ldots, $I_{M_n-1}$ cover $K$, and therefore it is easy to see that the intervals in the system
\[\mathcal{J}_i = \{I_{\ell_0}\cdot I_{\ell_1} \cdot \ldots\cdot I_{\ell_{i-1}}: \text{$0\le \ell_j < M_n$ for each $0\le j < n$}\}\]
cover the $i$-fold Minkowski product $\ad{K}{i}$. As $\diam I_j = 2\varepsilon_n$ for $0\le j <M_n$, \autoref{diamprod} states that the length of these intervals is at most $2i\varepsilon_n$.

To prove (iii), notice that 
\[\mathcal{J}_i = \{I_{\ell_0}\cdot I_{\ell_1} \cdot \ldots\cdot I_{\ell_{i-1}}: 0\le \ell_0\le \ell_1 \le \ldots \le \ell_{i-1} < M_n\},\]
because the Minkowski product is commutative and thus we can freely reorder the terms in these products. It is a simple result in combinatorics that the number of integer sequences $(\ell_j)_{j=0}^{i-1}$ such that $0\le \ell_0\le \ell_1\le \ldots \le \ell_{i-1}< M_n$ is $\binom{i+M_n-1}{M_n-1}$, and therefore
\begin{align*}
\lvert \mathcal{J}_i\rvert &\le \binom{i+M_n-1}{M_n-1} = \frac{(i+1)\cdot (i+2)\cdot \ldots\cdot (i+M_n-1)}{(M_n-1)!} \le \\
&\le (i+M_n-1)^{M_n-1} \le (i+n)^{M_n},
\end{align*}
using the assumption \eqref{mnbound}.
\end{proof}

Now we are ready to construct the interval system which will cover $\ap{f}{K}$:
\begin{claim}\label{coverseries}
$\ap{f}{K}$ can be covered by at most $(2n)^{n\cdot M_n}$ intervals, each of length less than $4R_n$.
\end{claim}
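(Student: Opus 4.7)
The plan is to combine the containment \eqref{finiteexp} with the covers produced by \autoref{coverprod}, being careful about how lengths and cardinalities add and multiply under the Minkowski operations.

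First, by \eqref{finiteexp} it suffices to cover $\sum_{i=0}^{n-1} a_i \ad{K}{i} + [-R_n, R_n]$. For $i = 0$, the set $a_0 \ad{K}{0} = \{a_0\}$ is a single point. For each $1 \le i < n$, apply \autoref{coverprod} and then scale by $a_i$: the system $\{a_i J : J \in \mathcal{J}_i\}$ covers $a_i \ad{K}{i}$ with at most $(i+n)^{M_n} \le (2n)^{M_n}$ intervals, each of length at most $|a_i| \cdot 2i\varepsilon_n$, by \autoref{diamtriv}(ii) and \autoref{coverprod}(ii).

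Next, taking Minkowski sums of interval covers gives an interval cover of the sum. Choosing one interval from each system (and the singleton $\{a_0\}$ for $i=0$), the resulting intervals cover $\sum_{i=0}^{n-1} a_i \ad{K}{i}$; by \autoref{diamtriv}(iii) each such interval has length at most
\[\sum_{i=1}^{n-1} 2i|a_i|\varepsilon_n \le 2n\varepsilon_n \sum_{i=1}^{n-1} |a_i| \le 2n\varepsilon_n R_0 < 2R_n,\]
where the last inequality uses \eqref{epsnchoice}. The total number of such intervals is at most $\prod_{i=1}^{n-1}(2n)^{M_n} = (2n)^{(n-1)M_n} \le (2n)^{nM_n}$.

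Finally, adding $[-R_n, R_n]$ to each interval in this cover (again \autoref{diamtriv}(iii)) increases each length by $2R_n$ without changing the count, yielding an interval cover of $\sum_{i=0}^{n-1} a_i \ad{K}{i} + [-R_n, R_n] \supseteq \ap{f}{K}$ of cardinality at most $(2n)^{nM_n}$ in which each interval has length less than $2R_n + 2R_n = 4R_n$. There is no real obstacle here beyond careful bookkeeping: the nontrivial content has already been absorbed into \autoref{coverprod} and into the choice of $\varepsilon_n$ in \eqref{epsnchoice}, which was designed precisely so that the error $2n\varepsilon_n R_0$ from the finite part is dominated by the tail $2R_n$.
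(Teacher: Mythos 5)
Your proof is correct and follows essentially the same route as the paper: reduce via \eqref{finiteexp}, take Minkowski sums of one interval from each scaled system $a_i\mathcal{J}_i$ together with $[-R_n,R_n]$, bound the lengths by $2R_n + 2n\varepsilon_n R_0 < 4R_n$ using \eqref{epsnchoice}, and multiply the cardinalities. No substantive differences to report.
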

\begin{proof}
Consider the interval systems $\mathcal{J}_1$, $\mathcal{J}_2$, \ldots, $\mathcal{J}_{n-1}$ guaranteed by \autoref{coverprod} and let $\mathcal{J}$ denote the system of all intervals which can be written as
\[a_0 + \sum_{i=1}^{n-1} a_i J_i + [-R_n, R_n]\quad\text{where $J_i\in \mathcal{J}_i$ for each $1\le j<n$.}\]
It is clear that the interval system $\mathcal{J}$ cover $\ap{f}{K}$, because $\mathcal{J}_i$ covers $\ap{K}{i}$ (for each $1\le i<n$) and \eqref{finiteexp} states that
\[\ap{f}{K} \subseteq \sum_{i=0}^{n-1} a_i \ad{K}{i} + [-R_n, R_n].\]

Applying \autoref{diamtriv} and \eqref{epsnchoice}, if $J\in\mathcal{J}$, then
\[\diam J \le 2 R_n + \sum_{i=1}^{n-1} \lvert a_i \rvert \cdot 2 i \varepsilon_n \le 2 R_n + \sum_{i=0}^\infty \lvert a_i \rvert \cdot 2 n \varepsilon_n = 2 R_n + 2n \varepsilon_n R_0 < 4 R_n.\]

Finally, for each $1\le i< n$ we know that $\lvert \mathcal{J}_i\rvert \le (i+n)^{M_n} < (2n)^{M_n}$ and therefore we can estimate the cardinality of $\mathcal{J}$ as
\[\lvert \mathcal{J}\rvert\le \prod_{i=1}^{n-1}\lvert \mathcal{J}_i\rvert \le (2n)^{n M_n}.\qedhere\]
\end{proof}

This cover of $\ap{f}{K}$ shows that
\[\mathcal{H}^s_\infty(\ap{f}{K}) \le (2n)^{n M_n} \cdot (4R_n)^s.\]

Taking logarithms on both sides yields that
\[\log \mathcal{H}^s_\infty(\ap{f}{K}) \le n\log (2n) \cdot  M_n + s\cdot\log (4R_n)\]
(with the convention that $\log 0 = -\infty$). Dividing both sides by $n\log (2n)$ yields
\[\frac{\log \mathcal{H}^s_\infty(\ap{f}{K})}{n\log (2n)} < M_n + s\cdot \frac{\log (4R_n)}{n\log (2n)}.\]
These calculations have demonstrated that this inequality holds for infinitely many $n\in\mathbb{N}$ (that is, for those $n\in\mathbb{N}$ where $K\in\mathcal{U}_n$). In \eqref{mnchoice} we have assumed that as $n$ converges to infinity, the right hand side of the inequality converges to $-\infty$. This is only possible if $\log \mathcal{H}^s_\infty(\ap{f}{K})=-\infty$, i.e. $\mathcal{H}^s_\infty(\ap{f}{K})=0$.
\end{proof}

\section{The case of large images}

In this section we will prove the following result:

\begin{theorem}\label{largemain}
Consider a power series $f(x) = \sum_{i=0}^\infty a_i x^i$ which converges absolutely on $[0,1]$. If there exists an $\varepsilon >0$ such that $\frac{\lvert a_{i+1}\rvert}{\lvert a_i\rvert}>\varepsilon$ for all but finitely many $i\in\mathbb{N}$, then $\Int \ap{f}{K}\neq \emptyset$ for the typical compact set $K\in\mathcal{K}([0,1])$. 
\end{theorem}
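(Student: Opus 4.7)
I plan to prove \autoref{largemain} via Baire category, exhibiting a dense $G_\delta$ set $\mathcal{G}\subseteq\mathcal{K}([0,1])$ such that $\Int\ap{f}{K}\neq\emptyset$ for each $K\in\mathcal{G}$. The key reduction comes from the fact that for any $x_0\in K$ and $i\geq 1$, the Minkowski product $\ad{K}{i}$ contains $x_0^{i-1}K$ (take $i-1$ factors equal to $x_0$). Hence
\[
  \ap{f}{K}\supseteq a_0+\sum_{i\geq 1}b_iK,\qquad b_i:=a_i x_0^{i-1},
\]
as Minkowski sums, with the slow-decay hypothesis transferring to $\lvert b_{i+1}\rvert/\lvert b_i\rvert\geq\varepsilon x_0$ for large $i$. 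It therefore suffices to produce an interval in $\sum_{i\geq 1}b_iK$.

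The next step is a Kakeya-style greedy argument: for an $m$-point arithmetic progression $F=\{k/(m-1):0\leq k<m\}$ with $m\geq 1/(\varepsilon x_0)$, the tail estimate $\sum_{i>n}\lvert b_i\rvert\geq(\varepsilon x_0)\lvert b_n\rvert/(1-\varepsilon x_0)$ implies the Kakeya condition $\lvert b_n\rvert\leq(m-1)\sum_{i>n}\lvert b_i\rvert$ for all sufficiently large $n$, and a standard greedy construction then shows that $\sum_{i\geq 1}b_iF$ equals a closed interval of length $\sum\lvert b_i\rvert$. Consequently, for any affine copy $\phi(F)=\mu F+\nu\subseteq K$ with $\mu>0$, the inclusion $\sum b_iK\supseteq\mu\sum b_iF+\nu\sum b_i$ produces an interval of length $\mu\sum\lvert b_i\rvert$ in $\sum b_iK$, hence in $\ap{f}{K}$.

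The dense $G_\delta$ set is $\mathcal{G}=\bigcap_n\mathcal{U}_n$ where $K\in\mathcal{U}_n$ means $K$ approximately contains (within Hausdorff error $1/n$) some affine copy of $F$. Each $\mathcal{U}_n$ is open by continuity and dense because any $K$ can be perturbed at arbitrarily small Hausdorff cost by adjoining a tiny affine copy of $F$ near an existing point of $K$. For $K\in\mathcal{G}$, I combine the subadditivity $d_\mathrm{H}(\sum b_iA_i,\sum b_iB_i)\leq\sum\lvert b_i\rvert\cdot d_\mathrm{H}(A_i,B_i)$ of Hausdorff distance under absolutely convergent Minkowski sums with the compactness of $\sum b_iK$ (as a Hausdorff-convergent Minkowski sum) and the principle that a closed subset of an interval dense in that interval equals the interval, so as to extract an exact interval from the approximating ones. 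The main obstacle is that the witnessing affine copies might degenerate ($\mu_n\to 0$) and that a suitable $x_0\in K$ with $\varepsilon x_0\geq 1/m$ must be secured simultaneously; this is resolved via a joint compactness/closedness argument on the parameter space $(\mu,\nu,x_0)$, extracting a nondegenerate limit arithmetic progression inside $K$ to which the Kakeya step can be applied.
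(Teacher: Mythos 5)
Your first two steps are sound: the inclusion $\ad{K}{i}\supseteq x_0^{i-1}K$ does give $\ap{f}{K}\supseteq a_0+\sum_{i\ge 1}b_iK$ with $\lvert b_{i+1}\rvert/\lvert b_i\rvert>\varepsilon x_0$ eventually, and for an exact $m$-point arithmetic progression with $m\gtrsim 1/(\varepsilon x_0)$ the greedy gap-versus-diameter argument does make $\sum_i b_i(\mu F+\nu)$ an interval. The gap is in your last paragraph. The typical compact set contains \emph{no} $3$-term arithmetic progression: for fixed $\delta>0$ the family of $K$ containing an AP with common difference at least $\delta$ is closed (by compactness of the witnesses) and has dense complement (finite sets in general position), hence is nowhere dense; now take the union over $\delta=1/k$. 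Consequently, for the typical $K\in\mathcal{G}$ the witnessing scales $\mu_n$ \emph{must} tend to $0$, and your \enquote{joint compactness/closedness argument extracting a nondegenerate limit arithmetic progression inside $K$} cannot succeed except on a meager set. The fallback via denseness also breaks down: $\sum_i b_i\tilde F_n$ is only $(\sum_i\lvert b_i\rvert)/n$-close to an interval of length $\mu_n\sum_i\lvert b_i\rvert\to 0$, a compact set that is merely Hausdorff-close to an interval need not contain one (a finite net of $[0,1]$ is close to $[0,1]$), and since the approximating intervals shrink and move there is no single interval in which $\sum_i b_iK$ is provably dense.

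The strategy is salvageable, but you must drop exact (and limiting) arithmetic progressions. All the Kakeya step uses from a finite $F'\subseteq K$ is that $\gap F'\le \varepsilon x_0\cdot\diam F'$, and \enquote{$K$ contains a finite subset with $\gap F'/\diam F'<1/j$} is already an \emph{open} condition on $K$ (moving each point by at most $\eta$ changes both quantities by at most $2\eta$) and a dense one (adjoin a tiny AP near a point of $K$); intersecting over $j$ and over the open dense condition $\max K>1/j$ lets you conclude directly, with $x_0=\max K$, and no limit extraction is needed. The paper sidesteps the whole issue more cheaply by \emph{not} discarding the multiplicative structure: it takes just two points $p<q$ of $K$ with $(q-p)/q^2<\varepsilon$ (available because the typical compact set is perfect) and observes that $\ad{\{p,q\}}{i}$ is automatically an $(i+1)$-point set with $\gap/\diam$ roughly $(q-p)/q$, to which the same gap/diameter lemma (\autoref{gapaiext}) applies; so the only typicality input is perfectness.
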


For example, this theorem implies the following:

\begin{example}
If $\lvert \lambda \rvert <1$, then $f(x) = \sum_{i=0}^\infty \lambda^i x^i = \frac{1}{1-\lambda x}$ converges absolutely on $[0,1]$ and $\Int \ap{f}{K} \neq \emptyset$ for the typical compact set $K\in\mathcal{K}([0,1])$.
\end{example}

Before proving \autoref{largemain}, we will state some simple lemmas about the following notion:

\begin{definition}\label{gapdef}
For $A \in \mathcal{K}(\mathbb{R})$, let
\[\gap A = \min \{\ell \ge 0 : A + [0,\ell]\text{ is an interval}\}.\]
\end{definition}

\noindent In other words, $\gap A$ is the length of the longest (bounded) interval complementary to $A$.

\begin{lemma}\label{gapab}
If $A, B\in\mathcal{K}(\mathbb{R})$ and $\gap A\le\diam B$, then $\gap(A+B)\le\gap B$.
\end{lemma}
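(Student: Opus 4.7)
The plan is to apply \autoref{gapdef} directly: it suffices to show that $(A+B)+[0,\gap B]$ is an interval, since this forces $\gap(A+B)\le\gap B$. Using associativity of the Minkowski sum,
\[ (A+B) + [0,\gap B] \;=\; A + \bigl(B+[0,\gap B]\bigr), \]
and by the definition of $\gap B$ the inner expression $J := B+[0,\gap B]$ is an interval. Since $0\in[0,\gap B]$ we have $B\subseteq J$, so $\diam J\ge\diam B\ge\gap A$. The whole statement therefore reduces to the auxiliary claim: if $J$ is a closed interval with $\diam J\ge\gap A$, then $A+J$ is an interval.

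For the auxiliary claim I would write $J=[c,d]$ and aim to prove $A+J=[\min A + c,\,\max A + d]$. The inclusion ``$\subseteq$'' is immediate. For the reverse, given $x$ in the target interval, it suffices to show that the closed interval $[x-d,\,x-c]$ meets $A$, because any such intersection point $a$ gives $x-a\in[c,d]=J$ and hence $x\in A+J$. The bounds on $x$ guarantee that $[y,z] := [x-d,x-c]\cap[\min A,\max A]$ is nonempty. If $y=\min A$ or $z=\max A$ we are done, since that endpoint already lies in $A$; otherwise $y=x-d$ and $z=x-c$ lie strictly between $\min A$ and $\max A$, so if $[y,z]$ avoided $A$ it would sit inside a \emph{bounded} complementary open interval $(\alpha,\beta)$ of $A$. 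Closed-in-open containment then yields $z-y<\beta-\alpha\le\gap A$, contradicting $z-y = d-c \ge \gap A$.

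The part that will require the most attention is the final case analysis, in particular two small observations: the bounding complementary interval of $A$ is actually bounded (which is why the boundary cases $y=\min A$ and $z=\max A$ must be peeled off first, to ensure $[y,z]$ sits in the open interior of $[\min A,\max A]$) and the containment of a closed interval inside an open interval produces a \emph{strict} length inequality, which is exactly what delivers the contradiction. Neither step is deep, but the argument juggles four intervals simultaneously ($J$, $[\min A,\max A]$, $[y,z]$, and $(\alpha,\beta)$). Once the auxiliary claim is established, the reduction from the first paragraph finishes the proof.
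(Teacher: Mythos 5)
Your proof is correct and takes essentially the same route as the paper's: both reduce the statement to showing that $A+\bigl(B+[0,\gap B]\bigr)$ is an interval, using that the interval $B+[0,\gap B]$ has length at least $\diam B\ge\gap A$. The only difference is that you prove in full the auxiliary claim (that $A+J$ is an interval whenever $J$ is an interval with $\diam J\ge\gap A$), which the paper asserts as obvious; your case analysis for that claim is sound.
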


\begin{proof}
\autoref{gapdef} implies that $B+[0, \gap B]$ is an interval. As the length of this interval is obviously at least $\diam B$ and $\diam B \ge \gap A$, we conclude that $A+B+[0, \gap B]$ is also an interval. This shows that
\[\gap B \ge \min \{\ell \ge 0 : A + B + [0,\ell]\text{ is an interval}\} = \gap (A+B).\qedhere\]
\end{proof}

\begin{lemma}\label{gapai}
Let $A_i \in \mathcal{K}(\mathbb{R})$ for $i\in\mathbb{N}$ and assume that $A = \sum_{i\in\mathbb{N}}A_i$ exists. If $\gap A_i \le\diam A_{i+1}$ for each $i\in\mathbb{N}$, then $A$ is an interval.
\end{lemma}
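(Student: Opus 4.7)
The plan is to introduce the partial sums $S_n = \sum_{i=0}^{n} A_i$ and the tails $T_n = \sum_{i=n+1}^{\infty} A_i$ (both are compact sets, since the full series converges), so that $A = S_n + T_n$ for every $n\in\mathbb{N}$. The whole argument then amounts to iterating \autoref{gapab} along this decomposition.

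First I would show by induction on $n$ that $\gap S_n \le \gap A_n$. The base case $S_0 = A_0$ is immediate, and in the inductive step the hypothesis together with the assumption $\gap A_n \le \diam A_{n+1}$ gives $\gap S_n \le \diam A_{n+1}$, so \autoref{gapab} (with $A = S_n$, $B = A_{n+1}$) yields $\gap S_{n+1} = \gap(S_n + A_{n+1}) \le \gap A_{n+1}$.

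Next I would bound $\gap A$ itself via the decomposition $A = S_n + T_n$. Because $T_n$ contains a translate of $A_{n+1}$ (translate by any point of $T_{n+1}$), we have $\diam T_n \ge \diam A_{n+1} \ge \gap A_n \ge \gap S_n$, and a second application of \autoref{gapab} (this time with $B = T_n$) gives $\gap A \le \gap T_n \le \diam T_n$ for every $n\in\mathbb{N}$.

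It then remains to observe that $\diam T_n \to 0$. Elementary Minkowski arithmetic gives $\diam S_n = \sum_{i=0}^{n}\diam A_i$, and since $\diam$ is continuous in the Hausdorff metric this increasing sequence converges to $\diam A < \infty$. Hence $\diam T_n = \sum_{i>n}\diam A_i \to 0$, forcing $\gap A = 0$, which together with compactness of $A$ means $A$ is an interval. I do not foresee a real obstacle; the only care needed is in verifying that each $T_n$ exists as a compact set with $\diam T_n = \sum_{i>n}\diam A_i$, and this is precisely what drives the final limiting step.
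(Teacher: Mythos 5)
Your proof is correct, and its core---the induction showing $\gap S_n \le \gap A_n$ by iterating \autoref{gapab}---is exactly the paper's argument. Where you diverge is the limiting step. The paper passes to the limit directly: since $\diam A_i \to 0$ forces $\gap A_i \to 0$, and $\gap$ is continuous on $\mathcal{K}(\mathbb{R})$ (in fact $\lvert \gap K - \gap L\rvert \le 2\, d_\mathrm{H}(K,L)$), it gets $\gap A = \lim_n \gap S_n \le \lim_n \gap A_n = 0$. You instead decompose $A = S_n + T_n$ and apply \autoref{gapab} a second time to obtain $\gap A \le \gap T_n \le \diam T_n \to 0$. Both finishes work. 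Yours avoids checking continuity of $\gap$, but pays by needing the tail sums $T_n$ to exist as compact sets---the one point you rightly flag. It does hold: after translating so that $0 \in A_i$ for every $i$ (legitimate because $\min S_n = \sum_{i\le n}\min A_i \to \min A$, so the translations are summable), the tail partial sums form an increasing sequence of compact subsets of a fixed compact interval and hence converge; this verification is roughly the same amount of work as the Lipschitz estimate the paper's route relies on. One small remark: the additivity $\diam S_n = \sum_{i\le n}\diam A_i$ and $\diam T_n = \sum_{i>n}\diam A_i$ that you invoke is valid for arbitrary compact subsets of $\mathbb{R}$ (minima and maxima add under Minkowski sum), even though \autoref{diamtriv} states it only for intervals, so you should justify it in that generality rather than cite the Fact.
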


\begin{proof}
A straightforward proof by induction shows that 
\[\gap\left(\sum_{j=0}^{i} A_j\right)\le \gap A_i\quad\text{for each }i\in\mathbb{N}\]
(the inductive step is just \autoref{gapab}).

The existence of $A = \sum_{i\in\mathbb{N}}A_i\in \mathcal{K}(\mathbb{R})$ clearly implies that
\[\lim_{i\to \infty} \diam A_i = 0\quad\text{and therefore}\quad\lim_{i\to\infty} \gap A_i = 0.\]

It is easy to verify that $\gap : \mathcal{K}(\mathbb{R}) \to \mathbb{R}$ is a continuous map and therefore
\[0\le \gap A = \lim_{i\to\infty}\gap\left(\sum_{j=0}^{i} A_j\right) \le \lim_{i\to\infty} \gap A_i = 0.\]
It is clear from the definition that $\gap A=0$ implies that $A$ is an interval.
\end{proof}

This immediately implies the following:
\begin{corollary}\label{gapaiext}
Let $A_i \in \mathcal{K}(\mathbb{R})$ for $i\in\mathbb{N}$ and assume that $A = \sum_{i\in\mathbb{N}}A_i$ exists. If $\gap A_i \le\diam A_{i+1}$ for all but finitely many $i\in\mathbb{N}$, then $\Int A\neq\emptyset$.
\end{corollary}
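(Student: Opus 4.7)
The plan is to reduce the claim directly to \autoref{gapai} by isolating the finite initial segment where the inequality may fail. By hypothesis there is some threshold $N\in\mathbb{N}$ with $\gap A_i \le \diam A_{i+1}$ for every $i\ge N$. I would set $B = \sum_{i=0}^{N-1} A_i$, which is a nonempty compact set as a finite Minkowski sum of nonempty compacts, and $C = \sum_{i=N}^{\infty} A_i$, which is well-defined and compact as the tail of the convergent Minkowski series defining $A$, so that $A = B + C$.

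Next I would apply \autoref{gapai} to the shifted sequence $(A_{N+j})_{j\in\mathbb{N}}$. By the choice of $N$, this sequence satisfies the full hypothesis of the lemma, $\gap A_{N+j}\le \diam A_{N+j+1}$ at every index $j\ge 0$, and its Minkowski sum is exactly $C$; the lemma therefore yields that $C$ is an interval. Picking any point $b\in B$ then gives $b + C \subseteq A$, and whenever $C$ is a nondegenerate interval this exhibits a nonempty open subinterval of $A$, establishing $\Int A \neq \emptyset$.

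The main obstacle is ensuring that $C$ really has positive length: \autoref{gapai} only guarantees that $C$ is an interval, and a singleton qualifies. Since the diameter of a Minkowski sum of compact sets is additive, it suffices to exhibit a single index $i\ge N$ with $\diam A_i > 0$, which forces $\diam C \ge \diam A_i > 0$. In the intended application to \autoref{largemain} the sets $A_i = a_i\ad{K}{i}$ will have positive diameter for infinitely many $i$ under the ratio hypothesis on the $a_i$ together with a suitable typical $K$, so this degenerate case does not interfere with the argument.
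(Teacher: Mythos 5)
Your decomposition is exactly the paper's: fix a threshold $i_0$ (your $N$) beyond which the inequality holds, apply \autoref{gapai} to the tail $\sum_{i\ge i_0}A_i$ to conclude it is an interval, and then add back the finite initial Minkowski sum. So the route is the same; the paper's proof simply asserts that $A_0+\cdots+A_{i_0-1}+A'$ ``clearly'' has nonempty interior. The caveat in your last paragraph, however, is a genuine issue that the paper passes over: \autoref{gapai} only produces an interval, and with the paper's definition of $\gap$ a singleton must count as a (degenerate) interval, since $\gap\{a\}=\diam\{a\}=0$. Consequently the corollary as literally stated is false: taking $A_i=\{0\}$ for all $i$ (or $A_0=\{0,1\}$ and $A_i=\{0\}$ for $i\ge 1$) satisfies the hypothesis for all but finitely many $i$, yet $A$ is a finite set with empty interior. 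Your fix is the right one --- additivity of $\diam$ under Minkowski sums of compacts in $\mathbb{R}$ means a single index $i\ge N$ with $\diam A_i>0$ forces the tail interval to be nondegenerate --- and, as you observe, this extra hypothesis is available in the only place the corollary is used: in \autoref{fpq} the condition $\liminf_i \diam A_{i+1}/\gap A_i>1$ already forces $\diam A_{i+1}>0$ for all large $i$. So your argument is correct where the paper's is slightly too casual; the clean way to record this is to add ``and $\diam A_i>0$ for infinitely many $i$'' (or even for a single sufficiently large $i$) to the hypotheses of the corollary.
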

\begin{proof}
If $\gap A_i \le\diam A_{i+1}$ for each $i \ge i_0$, then \autoref{gapai} states that
\[A'=\sum_{i=i_0}^\infty A_i\quad\text{is an interval.}\]
But then it is clear that 
\[A= A_0+A_1+\ldots+A_{i_0-1}+ A'\]
has nonempty interior, as we claimed.
\end{proof}

For the typical compact set $K$, the set $\ap{f}{K}=\sum_{i=0}^\infty a_i \ad{K}{i}$ is expressed as an infinite Minkowski sum. Unfortunately, we cannot apply the previous corollary directly, as $\gap(a_i \ad{K}{i})\nleq\diam(a_{i+1} K^{i+1})$ may happen \enquote{often} even in the case when the convergence $a_i\to 0$ is not too fast.

However, we can solve this by picking a certain subset $K'\subseteq K$, applying \autoref{gapaiext} for the infinite Minkowski sum $\ap{f}{K'} = \sum_{i=0}^\infty a_i\cdot \ad{K'}{i}$, and finally using that $\ap{f}{K'}\subseteq \ap{f}{K}$.

\begin{claim}\label{fpq}
Consider a power series $f(x) = \sum_{i=0}^\infty a_i x^i$ which converges absolutely on $[0,1]$ and satisfies that there exists an $\varepsilon >0$ such that $\frac{\lvert a_{i+1}\rvert}{\lvert a_i\rvert}>\varepsilon$ for all but finitely many $i\in\mathbb{N}$. If $0<p <q\le 1$ and $\frac{q-p}{q^2} < \varepsilon$, then $\Int \ap{f}{\{p, q\}} \neq \emptyset$.
\end{claim}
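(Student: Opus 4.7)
The plan is to apply \autoref{gapaiext} to the Minkowski sum $\ap{f}{\{p,q\}} = \sum_{i=0}^{\infty} a_i \ad{\{p,q\}}{i}$, with $A_i = a_i \ad{\{p,q\}}{i}$. The core of the argument is an explicit gap-versus-diameter computation for the finite sets $\ad{\{p,q\}}{i}$, combined with the two numerical hypotheses.

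First I would observe that, since Minkowski products are commutative, $\ad{\{p,q\}}{i} = \{\,p^{i-k}q^k : 0\le k\le i\,\}$, and since $0<p<q$ these $i+1$ elements listed in order of increasing $k$ are strictly increasing. A direct calculation gives the consecutive differences
\[p^{i-k-1}q^{k+1} - p^{i-k}q^{k} = p^{i-k-1}q^{k}(q-p)\qquad (0\le k\le i-1),\]
which is increasing in $k$ because $p<q$. So for $i\ge 1$ the largest gap is at $k=i-1$, giving $\gap(\ad{\{p,q\}}{i}) = q^{i-1}(q-p)$, and clearly $\diam(\ad{\{p,q\}}{i}) = q^i - p^i$. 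Multiplying by $|a_i|$ (a scalar factor preserves both gap and diameter up to its absolute value) yields
\[\gap A_i = |a_i|\,q^{i-1}(q-p),\qquad \diam A_{i+1} = |a_{i+1}|\,(q^{i+1}-p^{i+1}).\]

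The key step is then to verify the hypothesis of \autoref{gapaiext}, namely $\gap A_i \le \diam A_{i+1}$ for all but finitely many $i$. Using the assumption $|a_{i+1}|/|a_i|>\varepsilon$ (valid for all large $i$), it suffices to show
\[q^{i-1}(q-p) \le \varepsilon\bigl(q^{i+1}-p^{i+1}\bigr)\]
for all large $i$. Dividing by $q^{i+1}$, this is equivalent to
\[\frac{q-p}{q^2} \le \varepsilon\Bigl(1 - (p/q)^{i+1}\Bigr).\]
The hypothesis $(q-p)/q^2 < \varepsilon$ leaves a positive gap $\varepsilon - (q-p)/q^2 > 0$, and since $p<q$ we have $(p/q)^{i+1}\to 0$; so the displayed inequality holds for every sufficiently large $i$. (Note that the case $p=0$ is excluded by $p>0$, and the existence of $\ap{f}{\{p,q\}}$ as a compact set is automatic from the absolute convergence of $f$ on $[0,1]$.)

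Once this inequality is established, \autoref{gapaiext} applies to the sequence $(A_i)_{i\in\mathbb{N}}$ and yields $\Int \ap{f}{\{p,q\}} \neq \emptyset$. The main potential obstacle was identifying the right combinatorial description of $\ad{\{p,q\}}{i}$ and, in particular, locating the maximal gap; but once the gap formula $q^{i-1}(q-p)$ is correct, the appearance of $q^2$ in the hypothesis $(q-p)/q^2 < \varepsilon$ is explained precisely by the ratio $\diam A_{i+1}/\gap A_i$ asymptoting to $\varepsilon\cdot q^2/(q-p)$.
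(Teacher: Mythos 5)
Your proposal is correct and follows essentially the same route as the paper: the paper also sets $A_i = a_i\cdot\ad{\{p,q\}}{i}$, records the same formulas $\diam A_i = \lvert a_i\rvert (q^i-p^i)$ and $\gap A_i = \lvert a_i\rvert (q-p) q^{i-1}$ (which you verify in more detail), checks $\gap A_i \le \diam A_{i+1}$ for all large $i$ via the same $(q-p)/q^2 < \varepsilon$ computation, and concludes with \autoref{gapaiext}.
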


\begin{proof}
First we note that the absolute convergence of $f$ guarantees that $\ap{f}{\{p,q\}}$ is well-defined.

Let $A_i = a_i \cdot \ad{\{p, q\}}{i}$. It is easy to verify that
\[\diam A_i = \lvert a_i\rvert\cdot (q^i - p^i)\quad\text{and}\quad\gap A_i = \lvert a_i \rvert\cdot (q-p)\cdot q^{i-1}.\]
We know that $\frac{p^{i+1}}{q^{i+1}} \to 0$ and all but finitely many  $i\in\mathbb{N}$ satisfies that $\frac{a_{i+1}}{a_i}>\varepsilon>\frac{q-p}{q^2}$, therefore
\[\liminf_{i\to\infty}\frac{\diam A_{i+1}}{\gap A_i} =\liminf_{i\to\infty} \frac{\lvert a_{i+1} \rvert\cdot q^2}{\lvert a_i \rvert \cdot (q-p)}\left(1-\frac{p^{i+1}}{q^{i+1}}\right) > 1.\]
This means that $\gap A_i \le\diam A_{i+1}$ for all but finitely many $i\in\mathbb{N}$, but then we may use \autoref{gapaiext} to conclude that $\Int \ap{f}{\{p,q\}}\neq\emptyset$.
\end{proof}

Now we are ready to prove \autoref{largemain}:

\begin{restatelargemain}
Consider a power series $f(x) = \sum_{i=0}^\infty a_i x^i$ which converges absolutely on $[0,1]$. If there exists an $\varepsilon >0$ such that $\frac{\lvert a_{i+1}\rvert}{\lvert a_i\rvert}>\varepsilon$ for all but finitely many $i\in\mathbb{N}$, then $\Int \ap{f}{K}\neq \emptyset$ for the typical compact set $K\in\mathcal{K}([0,1])$. 
\end{restatelargemain}

\begin{proof}
It is well-known that the typical compact set $K\in\mathcal{K}([0,1])$ is perfect. In particular, this implies that there is a $0<r<1$ such that $K$ has infinitely many points in every neighbourhood of $r$.

If we pick $p, q\in K$ from a sufficiently small neighbourhood of $r$ (with $p<q$), then it is clear that they will satisfy $0<p<q\le 1$ and $\frac{q-p}{q^2}<\varepsilon$. Now we can apply \autoref{fpq} to see that $\Int \ap{f}{\{p,q\}}\neq \emptyset$ and therefore also $\Int \ap{f}{K}\neq \emptyset$, concluding the proof.
\end{proof}

\section{Open questions}\label{sec:last}

After examining the case of polynomials, the paper \cite{Chen} proposes the case of the exponential function
\[\exp(x)=\sum_{i=0}^\infty \frac{x^i}{i!},\]
which is one of the simplest examples of a nontrivial power series:
\newcommand{\citequestion}{{\cite[Question 4.3]{Chen}}}
\begin{question}[\citequestion]
Is it true that the typical compact set $K\subseteq [0,1]$ satisfies that $\dim_\mathrm{H} \ap{\exp}{K} = 0$?
\end{question}

The coefficients $a_i =\frac{1}{i!}$ lie in the gap where we cannot apply either \autoref{zerodimimg} or \autoref{largemain}. Therefore this question remains open, but our results underline its importance, showing that $\exp$ lies in the gap where the dimension of the image changes from zero to one.

Our results show that the image of the typical compact set may be zero-di\-men\-sional and may be one-dimensional for certain large classes of power series. It would be interesting to know whether all power series fall into these two classes:
\begin{question}
Is there a dimension $0<d<1$ and a power series $f$ that converges absolutely on $[0,1]$ such that the typical compact set $K\subseteq [0,1]$ satisfies that $\dim_\mathrm{H} \ap{f}{K} = d$?
\end{question}

Moreover, the statement of \autoref{largemain} leaves the following question open:
\begin{question}
Is there a power series $f$ that converges absolutely on $[0,1]$ such that the typical compact set $K\subseteq [0,1]$ satisfies that $\dim_\mathrm{H} \ap{f}{K} = 1$, but $\Int \ap{f}{K}=\emptyset$?
\end{question}

Also note that while we speak of \enquote{the} typical compact set, it is not evident that the dimension of the image of the typical compact set is meaningful in every case:
\begin{question}
Is there a power series $f$ that converges absolutely on $[0,1]$ such that there is no dimension $d$ for which the typical compact set $K\subseteq [0,1]$ satisfies that $\dim_\mathrm{H} \ap{f}{K} = d$?
\end{question}

\textsc{Acknowledgments.} We are grateful to M\'arton Elekes for many helpful remarks and discussions, and to Rich\'ard Balka for suggesting an improvement in \autoref{zerodimimg}.


\begin{thebibliography}{9}
\bibitem{AH} D. R. Adams, L. I. Hedberg, Function spaces and potential theory. Grundlehren der Mathematischen Wissenschaften, 314, \textit{Springer-Verlag, Berlin}, 1996.
\bibitem{Chen} C. Chen, Intersection properties of typical compact sets. Available at \href{http://arxiv.org/abs/1512.04726}{\texttt{arXiv:1512.04726}}.
\bibitem{Fa} K. Falconer, Fractal geometry. Mathematical foundations and applications. Second edition. \textit{John Wiley \& Sons, Inc., Hoboken, NJ}, 2003.
\bibitem{Ke}A. S. Kechris, Classical descriptive set theory. Graduate Texts in Mathematics, 156, \textit{Springer-Verlag, New York}, 1995.
\bibitem{PSS} Y. Peres, W. Schlag, B. Solomyak, Sixty years of Bernoulli convolutions. \textit{Fractal geometry and stochastics, II (Greifswald/Koserow, 1998)}, 39--65, Progr. Probab., 46, \textit{Birkh\"auser, Basel}, 2000.
\end{thebibliography}
\end{document}